\newcommand{\bool}[1]{\mathfrak{B}(#1)}
\newtheorem{theorem}{Theorem}[section]
\newtheorem{lemma}[theorem]{Lemma}
\newtheorem{corollary}[theorem]{Corollary}
\newtheorem{conjecture}[theorem]{Conjecture}
\newtheorem*{theorem-non}{Theorem}
\theoremstyle{definition}
\newtheorem{remark}[theorem]{Remark}
\newtheorem{example}[theorem]{Example}
\numberwithin{equation}{section}
\def\ZZ{{\mathbb Z}}
\def\NN{{\mathbb N}}
\def\QQ{{\mathbb Q}}
\def\KK{{\mathbb K}}
\def\sdepth{\operatorname{sdepth}}
\def\depth{\operatorname{depth}}
\DeclareMathOperator{\spdim}{spdim}
\DeclareMathOperator{\pdim}{pdim}
\DeclareMathOperator{\codim}{codim}
\newcommand{\set}[1]{\{#1\}}
\newcommand{\with}{\,:\,}
\newcommand{\defa}{:=}
\newcommand{\qq}[1]{``#1''}
\begin{document}

\title{Lcm-lattices and Stanley depth: a first computational approach}

\author{Bogdan Ichim}

\address{Simion Stoilow Institute of Mathematics of the Romanian Academy, Research Unit 5, P.O. Box 1-764, 014700 Bucharest, Romania} \email{bogdan.ichim@imar.ro}

\author{Lukas Katth\"an}

\address{Universit\"at Osnabr\"uck, FB Mathematik/Informatik, 49069
Osnabr\"uck, Germany}\email{lukas.katthaen@uos.de}

\author{Julio Jos\'e Moyano-Fern\'andez}

\address{Universidad Jaume I, Campus de Riu Sec, Departamento de Matem\'aticas, 12071
Castell\'on de la Plana, Spain} \email{moyano@uji.es}

\subjclass[2010]{Primary: 05E40; Secondary: 16W50.}
\keywords{Monomial ideal; lcm-lattice; Stanley depth; Stanley decomposition.}
\thanks{The first author was partially supported  by project  PN-II-RU-TE-2012-3-0161, granted by the Romanian National Authority for Scientific Research,
CNCS - UEFISCDI. The second author was partially supported by
the German Research Council DFG-GRK~1916. The third author was partially supported by the Spanish Government Ministerio de Econom\'ia y Competitividad (MEC), grant MTM2012-36917-C03-03 in cooperation with the European Union in the framework of the founds ``FEDER''; he also thanks the Institute of Mathematics at the University of Osnabr\"uck for hospitality during Summer 2014}

\begin{abstract}
Let $\KK$ be a field, and let $S=\KK[X_1, \ldots , X_n]$ be the polynomial ring. Let $I$ be a monomial ideal of $S$ with up to 5 generators.
In this paper, we present a computational experiment which allows us to prove that
\[
\depth_S S/I = \sdepth_S S/I < \sdepth_S I.
\]
This shows that the Stanley conjecture is true for $S/I$ and $I$, if $I$ can be generated by at most $5$ monomials.
The result also brings additional computational evidence for a conjecture made by Herzog.
\end{abstract}

\maketitle

\section{Introduction}

Let $\KK$ be an infinite field. Let $S=\KK[X_1, \ldots , X_n]$ be the standard $\ZZ^n$-graded polynomial ring, and let $M$ be a finitely generated $\ZZ^n$-graded $S$-module. The \emph{Stanley depth} of $M$, denoted $\sdepth_S M$, is a combinatorial invariant of $M$ related
to a conjecture of Stanley from 1982 \cite[Conjecture 5.1]{St}, which states that the inequality $\depth_S M \leq \sdepth_S M$ holds;  this is nowadays called the \emph{Stanley conjecture}.

In this paper we restrict ourselves to the case of a monomial ideal $I \subset S$.  The least common multiple lattice (or simply the \emph{lcm-lattice}) $L_I$ of $I$---introduced by Gasharov, Peeva and Welker in \cite{GPW}---has been revealed to be a useful tool in the study of the Stanley conjecture: as the authors have shown in \cite{IKM2}, the lcm-lattice $L_I$ essentially determines the Stanley depth (as well as the usual depth) of both $S/I$ and $I$.
More precisely, the lcm-lattice $L_I$ determines the \emph{Stanley projective dimension} in the particular cases $M = I$ or $M=S/I$. This invariant may be defined in general as $\spdim_S M := n - \sdepth_S M$ (see \cite{IKM2} for a precise definition), similar to the usual projective dimension (by the well-known Auslander-Buchsbaum formula \cite[Theorem 1.3.3]{BH}).

Since the Stanley projective dimension of an ideal depends only
on its lcm-lattice, one can interpret this number as a combinatorial invariant of
the lattice itself. This leads to a purely lattice theoretical formulation of the Stanley
conjecture. More precisely, let $L$ be an atomistic lattice. According to \Cref{prop:realize}, there exists an ideal $I \subset S = \KK[X_1,\dotsc,X_n]$ such that $L_I \cong L$ and we set
$\pdim_1 L \defa \pdim I$, $\pdim_2 L \defa \pdim S/I$, $\spdim_1 L \defa \spdim I$, and $\spdim_2 L \defa \spdim S/I$.
Then the statements \cite[Conjecture~2]{A1}, \cite[Conjecture~1]{A2} and \cite[Conjecture 64]{H} can be reformulated as follows:

\begin{conjecture}\cite[Conjecture 6.2]{IKM2} \label{conj:stanleylattice}
	For all finite lattices $L$, the following inequalities hold:
	\begin{enumerate}
		\item $\spdim_1 L \leq \pdim_1 L$,
		\item $\spdim_2 L \leq \pdim_2 L$, and
		\item $\spdim_1 L \leq \spdim_2 L - 1$.
	\end{enumerate}
\end{conjecture}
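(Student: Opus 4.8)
The plan is to fix, via \Cref{prop:realize}, a monomial ideal $I\subset S=\KK[X_1,\dotsc,X_n]$ with $L_I\cong L$ and to establish the three inequalities directly for $I$ and $S/I$. Because by \cite{IKM2} each of the four quantities depends only on $L$, I am free to pick whichever realization is most convenient. Rewriting everything through $\spdim=n-\sdepth$ and the Auslander--Buchsbaum identity $\pdim=n-\depth$, the assertions (1), (2), (3) become, respectively, $\sdepth_S I\geq\depth_S I$, $\sdepth_S S/I\geq\depth_S S/I$, and $\sdepth_S I\geq\sdepth_S S/I+1$. Thus (1) and (2) are the Stanley conjecture for $I$ and for $S/I$, while (3) is Herzog's strict inequality; I would attack the three in the order (3), then (1) and (2) jointly.

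For (3) I would work with the combinatorial model of Herzog, Vladoiu and Zheng, in which $\sdepth_S M$ is the maximum, over all partitions of a finite poset $P_M$ of monomials into intervals, of the smallest dimension attained by a part. The posets $P_I$ and $P_{S/I}$ live on the same ambient set, and $P_{S/I}$ differs from $P_I$ essentially by the adjunction of the monomials lying strictly below the generators. Starting from a partition of $P_{S/I}$ that realizes $\sdepth_S S/I$, I would transform it into a partition of $P_I$ by enlarging each interval that attains the minimal dimension, absorbing into it one further covering monomial of $P_I$; carried out consistently, this raises the minimal dimension by one and yields $\sdepth_S I\geq\sdepth_S S/I+1$. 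The technical heart is to perform all these absorptions simultaneously without conflicts, which I expect to reduce to a Hall-type matching condition on the cover relations of $L$.

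For (1) and (2) the plan is an induction on the number of atoms of $L$, equivalently on the number of generators of $I$. Deleting an atom $a$ produces two related smaller lattices, realized by ideals $I'$ and $I''$, together with a short exact sequence relating $S/I$ to the corresponding quotients; the mapping-cone estimate then bounds $\pdim S/I$ in terms of $\pdim S/I'$ and $\pdim S/I''$, while the Gasharov--Peeva--Welker formula $\beta_{i,\mathbf{a}}(S/I)=\dim_\KK\widetilde{H}_{i-2}\bigl((\hat{0},\mathbf{a});\KK\bigr)$ pins down the top nonvanishing homological degree, hence $\depth S/I$, purely from the order homology of the open intervals of $L$. On the Stanley side I would build a Stanley decomposition recursively along the same deletion, splicing optimal decompositions of the two smaller lattices, and then check that the smallest summand so produced has dimension at least $n-\pdim S/I$; the parallel recursion for $I$ handles (1).

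The step I expect to be the genuine obstacle is exactly this last one for (1) and (2). There is no construction, uniform over all finite lattices, that converts the interval-homology vanishing governing $\depth$ into a Stanley decomposition whose minimal summand is correspondingly large: the recursive splicing controls $\sdepth$ from below only when the two sub-decompositions can be glued without dropping the minimal dimension, and the cover structure of an arbitrary $L$ need not allow this. Closing the gap for every finite $L$ is therefore equivalent to settling the Stanley conjecture in full, so the crux of \Cref{conj:stanleylattice} is concentrated entirely in the general-lattice case of (1) and (2); the inductive scheme above is nonetheless the natural vehicle through which the conjecture can be certified lattice-by-lattice once the rank is controlled.
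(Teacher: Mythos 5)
There is a fundamental mismatch here: the statement you were given is a \emph{conjecture}, and the paper contains no proof of it in the generality you attempt. Your reduction of (1)--(3) via \Cref{prop:realize} and \Cref{thm:lcmmap} to the Stanley conjecture for $I$ and $S/I$ and to Herzog's strict inequality $\sdepth_S I \geq \sdepth_S S/I + 1$ is correct and matches the paper's own framing in the Introduction, but what the paper actually proves is only \Cref{thm:main}: all three inequalities hold for lattices with at most $5$ atoms. Its method is entirely different from your inductive/matching scheme -- it is a finite computation. Algorithm~\ref{Alg:enum} enumerates every isomorphism class in $\mathcal{L}_k$ (e.g.\ all $7443$ atomistic lattices on $5$ atoms) by repeatedly collapsing non-atom meet-irreducible elements of the boolean lattice $\bool{k}$, using \Cref{lem:homom} and \Cref{lem:factor}; the monotonicity of $\spdim_1,\spdim_2,\pdim_1,\pdim_2$ along the surjection order on $\mathcal{L}_k$ (\Cref{coro:lcmmap}) then reduces the verification of $\depth \leq \sdepth$ to the few \emph{maximal} lattices for each value of the projective dimension ($8$ cases for $k=5$), which are checked by machine, while the reverse inequality $\depth_S S/I \geq \sdepth_S S/I$ is checked at the \emph{minimal} elements.

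Your proposal, by your own admission, does not close the argument, and the gap is not a fixable technicality. For (3), the \qq{Hall-type matching condition} that is supposed to let you absorb covering monomials into the minimal-dimension intervals of a partition of $P_{S/I}$ is never formulated, and no such uniform absorption is known: even the weak inequality $\sdepth_S I \geq \sdepth_S S/I$ is open for general monomial ideals, so this step is itself a conjecture. For (1) and (2), you concede that the splicing step in your induction is equivalent to the full Stanley conjecture -- which means the outline proves nothing beyond the reduction. Worse, no completion of your plan can exist: part (2) is now known to be \emph{false} in general, since Duval, Goeckner, Klivans and Martin constructed a non-partitionable Cohen--Macaulay simplicial complex, which (via the reduction of Herzog, Soleyman Jahan and Yassemi) disproves the Stanley conjecture for $S/I$. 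The only viable content of your final remark -- certifying the conjecture lattice-by-lattice once the number of atoms is bounded -- is precisely what the paper does, and it does so by exhaustive enumeration plus the monotonicity of \Cref{coro:lcmmap}, not by induction on atom deletion or mapping cones.
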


The use of lcm-lattice techniques makes possible a rather simple computational approach to Conjecture \ref{conj:stanleylattice} for ideals with few generators, which is presented in this paper.

As a result of our computational experiments, we prove the following:

\begin{theorem} \label{thm:main}
For every monomial ideal $I \subsetneq S$ with at most $5$ generators, it holds that
\[
\depth_S S/I = \sdepth_S S/I < \sdepth_S I.
\]
This further implies that all parts of Conjecture \ref{conj:stanleylattice} hold (in this case).
\end{theorem}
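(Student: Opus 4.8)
The plan is to reduce the infinite problem---over all monomial ideals with at most $5$ generators---to a finite computation. The key observation, supplied by the theory of \cite{IKM2}, is that $\spdim_1 L$ and $\spdim_2 L$ (and correspondingly $\pdim_1 L$, $\pdim_2 L$) depend only on the isomorphism type of the lcm-lattice $L_I$, not on the particular ideal $I$ realizing it. Since an ideal with at most $5$ generators has an lcm-lattice with at most $5$ atoms, and a finite atomistic lattice on $5$ atoms has boundedly many isomorphism types, it suffices to enumerate all finite atomistic lattices $L$ with at most $5$ atoms and verify the three displayed (in)equalities for each. Concretely, I would first establish that $\depth_S S/I = \sdepth_S S/I$, then that $\sdepth_S S/I < \sdepth_S I$, both as facts about each lattice in the finite list.

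First I would set up the enumeration. Using \Cref{prop:realize}, every finite atomistic lattice $L$ with $k \le 5$ atoms is realized as $L_I$ for some monomial ideal $I \subset S = \KK[X_1,\dotsc,X_n]$; one can take $I$ with exactly $k$ generators in a polynomial ring whose number of variables is controlled by the lattice. So I would generate the complete list of isomorphism classes of atomistic lattices on $\le 5$ atoms (a finite, machine-tractable set). For each such $L$, I would pick a canonical realizing ideal $I_L$ and compute the four invariants $\depth S/I_L$, $\sdepth S/I_L$, $\depth I_L$, and $\sdepth I_L$. The depth computations are standard (e.g.\ via free resolutions or local cohomology), while the Stanley depths are computed via the poset/partition method of Herzog--Vladoiu, which is effective for a fixed ideal. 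Then I would check, case by case, the chain $\depth_S S/I_L = \sdepth_S S/I_L < \sdepth_S I_L$ on the whole list; since $\pdim$ and $\spdim$ are determined by $L$, verifying the chain for one representative settles it for all ideals with that lcm-lattice, and hence for all ideals with $\le 5$ generators. Finally, translating through $\spdim_i = n - \sdepth$ and $\pdim_i = n - \depth$, the verified chain immediately yields all three parts of \Cref{conj:stanleylattice} in this range.

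The main obstacle is twofold and is computational rather than conceptual. The first difficulty is the combinatorial explosion: even bounded by $5$ atoms, the number of atomistic lattices is large enough that a naive enumeration followed by a naive Stanley-depth computation (the Herzog--Vladoiu method searches over interval partitions of a characteristic poset, which grows super-exponentially in the poset size) would be infeasible. The crux of the experiment is therefore to exploit the lcm-lattice reduction aggressively---working with the lattice invariant directly, pruning isomorphic and otherwise redundant cases, and bounding the ambient dimension $n$ as tightly as possible so that the associated posets stay small enough to be partitioned exhaustively. The second difficulty is certification: because the claim is a universally quantified statement proved by finite search, I must ensure the enumeration of atomistic lattices is genuinely exhaustive and the per-lattice verification is exact (interval integer arithmetic on the posets), so that no case is silently missed. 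Once these engineering hurdles are cleared, the mathematical content reduces to the uniform validity of the displayed chain across the finite list, which the theory of \cite{IKM2} guarantees is well posed.
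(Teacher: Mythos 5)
Your proposal is mathematically sound and shares the paper's core reduction: by \Cref{prop:realize} and \Cref{thm:lcmmap}, the chain $\depth_S S/I = \sdepth_S S/I < \sdepth_S I$ depends only on the isomorphism class of the lcm-lattice, so it suffices to verify it on the finitely many atomistic lattices with at most $5$ atoms. Where you genuinely diverge is in how the finite verification is organized, and this is exactly where the paper's key device lives. You propose computing $\depth$ and $\sdepth$ for a representative ideal of \emph{every} lattice in the list; for $k=5$ that means $7443$ Stanley-depth computations, each a super-exponential search over interval partitions of a characteristic poset---precisely the bottleneck you flag, and your mitigations (isomorphism pruning, bounding the ambient dimension) do not remove it. The paper instead puts a partial order on $\mathcal{L}_k$ ($L \geq L'$ iff there is a surjective join-preserving map $L \to L'$) and uses that $\spdim_1, \spdim_2, \pdim_1, \pdim_2$ are monotonous along it (\Cref{coro:lcmmap}). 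Since $\pdim$ is cheap to compute (Macaulay2, seconds for all of $\mathcal{L}_5$), it is computed everywhere; the expensive Stanley-depth verifications are then confined to the \emph{maximal} lattices within each fiber of $\pdim$ (only $8$ cases for $k=5$) for the inequality $\depth \leq \sdepth$, and to the \emph{minimal} ones for the reverse inequality $\sdepth_S S/I \leq \depth_S S/I$; monotonicity propagates both to the whole poset. This is what made the experiment terminate, whereas the feasibility of your brute-force variant is uncertain. A secondary point: you take the exhaustive enumeration of atomistic lattices as given, but producing and certifying it is the content of the paper's Algorithm \ref{Alg:enum}, which generates $\mathcal{L}_k$ from the Boolean lattice $\bool{k}$ by repeatedly collapsing non-atom meet-irreducible elements (\Cref{lem:homom} and \Cref{lem:factor}); conveniently, that same machinery records the cover relations of the poset, so the monotonicity reduction comes essentially for free once the enumeration is done.
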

\bigskip

The content of the paper is organized as follows. Section \ref{sec:pre} includes some needed prerequisites (concerning Stanley depth and lattice theory).
In Section \ref{sec:enum} we present an algorithm for computing all lcm-lattices of monomial ideals with a given number of minimal generators.
This allows the computation of the Stanley depth for all lcm-lattices, up to a certain bound.
In Section \ref{sec:sdepth} we classify all lcm-lattices of monomial ideals with at most 5 generators with the aid of the computer and we compute several attached invariants.
This leads to Theorem \ref{thm:main}; observe that in this case the \emph{equality} $\depth_S S/I = \sdepth_S S/I$ holds.
We also provide an example of an ideal with 6 generators where the equality no longer holds (see Example \ref{ex:6generators}).
We finish the paper by presenting the full numerical data obtained during our experiments for all lcm-lattices of monomial ideals with 4 generators in the Appendix. We hope this data can be useful also for other people.

For the lattice theoretical computations we used the Maple package \texttt{poset} by J. Stembridge \cite{JS}.
For the computation of invariants, we used \texttt{Macaulay2} \cite{M2} and our own experimental software implemented in C++ and Maple.

\section{Preliminaries} \label{sec:pre}

In this section we present the basics of the two main ingredients needed in the paper, namely lcm-lattices and Stanley depth.

\subsection{Stanley depth and Stanley projective dimension}

Consider the polynomial ring $S$ endowed with the multigraded structure. Let $M$ be a finitely generated graded $S$-module, and let $\lambda$ be a homogeneous element in $M$.
Let $Z \subset \set{X_1, \ldots , X_n}$ be a subset of the set of indeterminates of $S$. The $\KK[Z]$-submodule $\lambda \KK[Z]$ of $M$ is called a \emph{Stanley space} of $M$ if $\lambda \KK[Z]$ is free (as $\KK[Z]$-submodule).
A \emph{Stanley decomposition} of $M$ is a finite family
$$
\mathcal{D}=(\KK[Z_i],\lambda_i)_{i\in \mathcal{I}}
$$
in which $Z_i \subset \{X_1, \ldots ,X_n\}$ and $\lambda_i\KK[Z_i]$ is a Stanley space of $M$ for each $i \in \mathcal{I}$ with
$$
M = \bigoplus_{i \in \mathcal{I}} \lambda_i\KK[Z_i]
$$
as a multigraded $\KK$-vector space. This direct sum carries the structure of $S$-module and has therefore a well-defined depth (the $S$-module structure of the direct
sum is different from that of $M$ in general). The \emph{Stanley depth} $\sdepth\ M$ of $M$ is defined to be the maximal depth of a Stanley decomposition of $M$.

\subsection{The lcm-lattice of a monomial ideal}
The second ingredient we need is a bit of lattice theory.

A \emph{lattice} $L$ is a partially ordered set $(L,\leq)$ such that, for any $P,Q \in L$, there is a unique greatest lower bound $P \wedge Q$ called the meet of $P$ and $Q$, and there is a unique least upper bound $P\vee Q$ called the join of $P$ and $Q$.

In this paper we only consider \emph{finite} lattices, so in the remainder of the paper all lattices are assumed to be finite.
A finite lattice has, in particular, a unique minimal element $\hat{0}$ and a unique maximal element $\hat{1}$.

A \emph{meet-irreducible} element of $L$ is an element which is covered by exactly one other element.
An \emph{atom} is an element in $L$ that covers the minimal element $\hat{0}$. 
We say that a lattice is \emph{atomistic}, if every element can be written as a join of atoms.
A \emph{join-preserving map} $\phi: L \longrightarrow L'$ is a map with $\phi(P \vee Q)=\phi(P) \vee \phi(Q) $ for all $P,Q \in L$.

Let $I \subset S$ be a monomial ideal with $I \neq (0)$. The \emph{lcm-lattice of $I$}, $L_I$, is defined as the set of all monomials that can be obtained as the least common multiple (lcm) of some non-empty subset of the minimal generators of $I$, ordered by divisibility.
Note that $L_I$ is an atomistic lattice and that its atoms are exactly the minimal generators of $I$.
Conversely, the following holds (see \cite{P}, and \cite[Corollary 3.6]{IKM2}):

\begin{corollary}\label{prop:realize}
Every finite atomistic lattice can be realized as lcm-lattice of monomial ideal.
\end{corollary}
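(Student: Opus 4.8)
The plan is to realize $L$ inside the divisibility lattice of monomials by encoding each element of $L$ as a squarefree monomial, using the meet-irreducible elements of $L$ as a coordinate system. First I would let $Q$ denote the set of meet-irreducible elements of $L$ and introduce one variable $X_q$ for each $q \in Q$, so that $S = \KK[X_q \with q \in Q]$. The basic structural fact I would rely on is the classical representation theorem for finite lattices: every element $p \in L$ is the meet of the meet-irreducible elements lying above it, i.e. $p = \bigwedge\set{q \in Q \with p \leq q}$. With this in hand I would associate to each $p \in L$ the squarefree monomial
\[
m_p \defa \prod_{q \in Q,\ p \not\leq q} X_q,
\]
and define $I$ to be the ideal generated by the monomials $m_a$ as $a$ ranges over the atoms of $L$.

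The heart of the argument is to check that $p \mapsto m_p$ translates the lattice structure of $L$ into the divisibility structure of monomials. For the order, I would show that $p \leq p'$ if and only if $m_p$ divides $m_{p'}$: divisibility of $m_{p'}$ by $m_p$ amounts to the inclusion $\set{q \with p \not\leq q} \subseteq \set{q \with p' \not\leq q}$, which is the contrapositive of the statement that every meet-irreducible above $p'$ is also above $p$; by the meet-representation of $p'$ this is exactly $p \leq p'$. For the joins, I would use that a meet-irreducible $q$ lies above $p \vee p'$ precisely when it lies above both $p$ and $p'$, so that $\set{q \with p \vee p' \not\leq q} = \set{q \with p \not\leq q} \cup \set{q \with p' \not\leq q}$; reading this off the exponents shows $m_{p \vee p'} = \lcm(m_p, m_{p'})$. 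In particular the bottom element maps to the empty product $m_{\hat 0} = 1$.

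It then remains to identify the image with $L_I$. Since $L$ is atomistic, every $p \in L$ is a join of atoms, so by the join-to-lcm property each $m_p$ is the lcm of a subset of the generators $\set{m_a}$, and conversely every such lcm is an $m_p$; thus the monomials arising as lcms of the generators are exactly $\set{m_p \with p \in L}$. Because distinct atoms of a lattice are incomparable, the associated monomials form an antichain under divisibility and hence are genuinely the minimal generators of $I$, so the atoms of $L_I$ match the atoms of $L$. Combining the order-isomorphism established above (which is injective, since $m_p = m_{p'}$ forces $p \leq p'$ and $p' \leq p$) with this surjectivity yields $L_I \cong L$. The step I expect to require the most care is the equivalence between the lattice order and divisibility, since it is precisely there that the meet-irreducible representation theorem is invoked; the atomistic hypothesis, by contrast, enters only at the final surjectivity step, but it is indispensable there, as it guarantees that the generators we single out actually account for all of $L$.
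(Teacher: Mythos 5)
Your proof is correct, and it is essentially the same argument as the one behind the paper's citation: the paper proves this corollary by referring to \cite{P} and \cite[Corollary 3.6]{IKM2}, and the construction used there is exactly your encoding of lattice elements as squarefree monomials indexed by the meet-irreducible elements, with the generator attached to an atom $a$ being $\prod_{a \not\leq q} X_q$ (indeed, the appendix's Table 2 ideals were produced by this very method via \cite[Theorem 3.4]{IKM2}). Your verification of the order/divisibility correspondence via the meet-representation theorem and of joins mapping to lcms is exactly the standard justification, so there is nothing to correct.
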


The starting point for the considerations in the present paper is the following result:
\begin{theorem}[\cite{GPW},\cite{IKM2}]\label{thm:lcmmap}
	Let $I \subset S$ and $I' \subset S'$ be two monomial ideals in two (possibly) different polynomial rings.
	Assume that there is a surjective join-preserving map $L_I \rightarrow L_{I'}$, which is bijective on the atoms.
	Then
\begin{enumerate}
\item $\spdim_S S/I \geq \spdim_{S'} S'/I'$;
\item $\spdim_S I \geq \spdim_{S'} I'$;
\item $\pdim_S S/I \geq \pdim_{S'} S'/I'$ and
\item $\pdim_S I \geq \pdim_{S'} I'$.
\end{enumerate}
In particular, $\spdim_S S/I$, $\spdim_S I$,
$\pdim_S S/I$ and $\pdim_S I$ depend only on $L_I$.
\end{theorem}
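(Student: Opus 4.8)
The plan is to treat the two ordinary statements (3)--(4) and the two Stanley statements (1)--(2) by separate mechanisms, since only the former are genuinely classical. First I would record two formal reductions. Applying any of the four inequalities to an isomorphism $L_I \cong L_{I'}$ (which is available by \Cref{prop:realize}, realizing a given lattice over possibly different rings) yields both $\geq$ and $\leq$; this gives the final ``depends only on $L_I$'' assertion for free, so it suffices to prove the inequalities. Moreover, the short exact sequence $0\to I\to S\to S/I\to 0$ with $S$ free gives $\pdim_S I=\pdim_S S/I-1$ whenever $I\neq(0)$, and likewise for $I'$ (note $I'\neq(0)$ because $\phi$ is bijective on the atoms); hence (4) is equivalent to (3). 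I would emphasize that no such relation is available on the Stanley side---indeed part (3) of \Cref{conj:stanleylattice} predicts only an inequality between $\spdim_1$ and $\spdim_2$---so (1) and (2) must be handled independently.

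For (3) the key input is the Gasharov--Peeva--Welker description of the minimal multigraded free resolution \cite{GPW}: for each $m\in L_I$ the multigraded Betti number $\beta_{i,m}(S/I)$ equals $\dim_\KK \widetilde H_{i-2}\bigl(\Delta(\hat 0,m)\bigr)$, the reduced homology of the order complex of the open interval $(\hat0,m)$ in $L_I$. Consequently $\pdim_S S/I=\max\{\,i : \widetilde H_{i-2}(\Delta(\hat0,m))\neq 0 \text{ for some } m\,\}$ is an invariant of the abstract lattice. To compare $L_I$ with its quotient, I would use that a surjective join-preserving $\phi$ restricts, over each $m'\in L_{I'}$, to a map relating $\Delta(\hat0,m')$ to the intervals $\Delta(\hat 0,m)$ with $m\in\phi^{-1}(m')$. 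Since $\phi$ preserves joins and is surjective it admits an order-preserving section, so a Quillen-type fibre argument gives enough control on homology to conclude that every nonvanishing $\widetilde H_{i-2}(\Delta(\hat0,m'))$ forces a nonvanishing $\widetilde H_{i-2}(\Delta(\hat0,m))$ for some $m$ in the fibre. This yields $\pdim_S S/I\geq\pdim_{S'}S'/I'$, hence (3), and (4) follows from the reduction above.

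For the Stanley statements I would pass to a finite combinatorial model of Stanley depth. In the spirit of the Herzog--Vladoiu characteristic-poset description, $\sdepth_S S/I$ (resp.\ $\sdepth_S I$) can be computed as the maximum, over partitions of a finite poset $P\subset\ZZ^n$ into intervals, of the minimal number of free directions occurring in an interval; thus $\spdim_S S/I=n-\sdepth_S S/I$ becomes a partition invariant of $P$. The central claim, following \cite{IKM2}, is that this finite poset together with the data needed to evaluate the invariant is determined by $L_I$, and that a surjective join-preserving $\phi$ that is bijective on the atoms induces a compatible map of the associated models. I would then transport an optimal interval partition along $\phi$: from a partition of the model attached to $S/I$ (resp.\ $I$) realizing $\sdepth$, one produces an interval partition of the model attached to $S'/I'$ (resp.\ $I'$) whose minimal dimension is at least as large, giving $\sdepth_S S/I\leq\sdepth_{S'}S'/I'$ after the normalization, i.e.\ $\spdim_S S/I\geq\spdim_{S'}S'/I'$. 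This proves (1), and the identical argument with $I$ in place of $S/I$ gives (2).

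I expect the Stanley half to be the main obstacle. The resolution-theoretic part is essentially functorial once the Gasharov--Peeva--Welker homology formula is in hand, whereas Stanley decompositions live in the full $\ZZ^n$-grading and are not manifestly functorial; the real work is to show that the relevant partition data compresses to the finite lattice $L_I$ and that this compression is natural with respect to join-preserving surjections bijective on atoms. Care is also needed because $S$ and $S'$ may have different numbers of variables, so the passage between $\sdepth$ and $\spdim=n-\sdepth$ must be tracked throughout the transport of partitions. Once (1)--(4) are established, the closing ``depends only on $L_I$'' statement is immediate from the isomorphism case noted at the outset.
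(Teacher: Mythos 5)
The paper does not actually prove \Cref{thm:lcmmap}; its entire ``proof'' consists of the two sentences following the statement, citing Theorem~3.3 of \cite{GPW} for parts (3)--(4) and Theorem~4.5 of \cite{IKM2} for parts (1)--(2). Your formal reductions are correct and standard: the final invariance claim follows by applying the inequalities in both directions to an isomorphism, and $\pdim_S I = \pdim_S S/I - 1$ makes (3) and (4) equivalent. However, both substantive halves of your argument have genuine gaps, and in each case the step you leave open is precisely the cited theorem.

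For (3), the ``Quillen-type fibre argument'' does not work as described. The order-preserving section $s$ does exist (send $m'$ to the join of the lifts of the atoms below $m'$; atomisticity gives $\phi \circ s = \mathrm{id}$ and monotonicity), but it yields no retraction between order complexes of open intervals, because $\phi$ does not map $(\hat{0},s(m'))$ into $(\hat{0},m')$: elements strictly below $s(m')$ can map onto $m'$. A concrete example shows that no argument localized at $s(m')$ can succeed: take $I = (x,y,z)$, so $L_I = \bool{3}$, and $I' = (xy,xz,yz)$ with $\phi$ the induced map. For $m' = xyz$, the open interval $(\hat{0},m')$ in $L_{I'}$ is three incomparable atoms, so $\widetilde{H}_0 \neq 0$; but $s(m') = xyz$ is the top of $\bool{3}$, whose open lower interval is homotopy equivalent to a circle, so $\widetilde{H}_0 = 0$ there. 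The required nonvanishing class sits only at \emph{other} elements of the fibre, e.g.\ $m = \lcm(x,y)$. So the true statement --- $\beta_{i,m'}(S'/I') \leq \sum_{m \in \phi^{-1}(m')} \beta_{i,m}(S/I)$, which does imply (3) via the homology formula --- is not a formal consequence of the formula plus a section; it is essentially Theorem~3.3 of \cite{GPW} itself, proved there not by interval topology but by relabeling the minimal multigraded free resolution of $S/I$ along $\phi$, which produces a (possibly non-minimal) resolution of $S'/I'$. For (1)--(2), your text is a plan rather than a proof: what you call the ``central claim'' --- that the partition data computing $\sdepth$ compresses to $L_I$ and that this compression is natural with respect to surjective join-preserving maps bijective on atoms --- is exactly the content of Theorem~4.5 of \cite{IKM2}, i.e.\ the result being cited. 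Nothing in your sketch explains how an interval partition of the poset attached to $S/I$ (living in $\ZZ^n$) is transported to one for $S'/I'$ (living in $\ZZ^{n'}$ with $n' \neq n$ in general) without loss under the normalization $\spdim = n - \sdepth$; that transport is the entire difficulty, and it is what \cite{IKM2} actually carries out.
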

Here, the statement about the projective dimension follows from Theorem 3.3 of \cite{GPW}.
The statements about the Stanley projective dimension are special cases of Theorem 4.5 in \cite{IKM2} by the authors.

\begin{remark}
Note that in \cite{IKM2}, we considered modules of the form $I/J$, which required us to work in the more general context of
pairs of lcm-\emph{semi}lattices of sets of monomials.
In the present paper, we only consider modules of the form $S/I$ and $I$, so we can simplify the discussion.
\end{remark}

The next two structural lemmata are Lemma 4.3 and Lemma 4.4 in \cite{IKM2}, and they will be needed in the sequel.
Fix a meet-irreducible element $a \in L$ and let $a_+ \in L$ denote the unique element covering it.
Consider the equivalence relation $\sim_a$ on $L$ defined by setting $a \sim_a a_+$ and any other element is equivalent only to itself.
\begin{lemma}\label{lem:homom}
	There is a natural lattice structure on $L / \sim_a$, such that the canonical surjection $\pi_a: L \longrightarrow L/\sim_a$ preserves the join.
	Moreover, if $L$ is atomistic and $a$ is not an atom, then $L / \sim_a$ is atomistic.
\end{lemma}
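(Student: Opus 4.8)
The plan is to show that $\sim_a$ is a \emph{join-congruence}, i.e.\ that it is compatible with the join, and then to observe that the quotient of a finite join-semilattice with a least element is a lattice. First I would isolate the one structural fact about meet-irreducibles on which everything rests: in a finite lattice, if $a$ is covered only by $a_+$, then every $b$ with $b > a$ satisfies $b \ge a_+$. This follows by choosing a minimal element $c$ of the finite nonempty set $\set{x \with a < x \le b}$; such a $c$ covers $a$, so by uniqueness $c = a_+$, whence $a_+ = c \le b$.

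With this in hand, the next step is to check that for every $z \in L$ one has $a \vee z \sim_a a_+ \vee z$. Indeed, if $z \le a$ then $a \vee z = a$ and $a_+ \vee z = a_+$, so the two sides form the nontrivial class $\set{a, a_+}$; otherwise $a \vee z > a$, so by the structural fact $a \vee z \ge a_+$, which forces $a \vee z = a_+ \vee z$ literally. Since $\set{a,a_+}$ is the only non-singleton class, this is exactly the statement that $\sim_a$ respects joins, so $[x] \vee [y] \defa [x \vee y]$ is well defined on $L/\sim_a$ and $\pi_a$ preserves joins by construction. I expect this verification---and its reliance on meet-irreducibility through the structural fact above---to be the crux of the argument; note that $\sim_a$ need \emph{not} be compatible with meets, which is why only join-preservation is asserted.

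It then remains to upgrade the join-semilattice $L/\sim_a$ to a lattice. Here I would invoke the standard fact that a finite join-semilattice possessing a least element is automatically a lattice: the class $[\hat 0]$ is least (as $[\hat 0] \vee [x] = [x]$ for all $x$), and meets can be recovered as $[x] \wedge [y] = \bigvee \set{[z] \with [z] \le [x],\ [z] \le [y]}$, a join over a finite nonempty set. This produces the natural lattice structure and settles the first assertion.

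For the \emph{moreover} part, assume $L$ is atomistic and $a$ is not an atom; I may take $a \neq \hat 0$, since $a = \hat 0$ would force $L$ to have a single atom and $L/\sim_a$ to be a one-element lattice, which is atomistic. The key observation is that each atom $\alpha$ of $L$ maps to an atom of $L/\sim_a$: as $a$ is not an atom we have $\alpha \neq a$, and since $\hat 0 < a < a_+$ the element $a_+$ is not an atom either, so $\alpha \notin \set{a, a_+}$ and $[\alpha]$ is a singleton distinct from $[\hat 0]$; a short check on the induced order (using once more that any element strictly above $a$ lies above $a_+$) rules out a class strictly between $[\hat 0]$ and $[\alpha]$. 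Finally, because $\pi_a$ is surjective and join-preserving, every element of $L/\sim_a$ has the form $[x] = \pi_a\bigl(\bigvee_{\alpha \le x} \alpha\bigr) = \bigvee_{\alpha \le x} [\alpha]$, a join of atoms of $L/\sim_a$; hence $L/\sim_a$ is atomistic.
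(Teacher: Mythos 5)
Your proof is correct. Note that the paper itself does not prove this lemma at all---it is imported verbatim as Lemma~4.3 of \cite{IKM2}---so there is no in-paper argument to compare against; your proposal supplies exactly the verification that is delegated to that reference. Your route is the natural one: the structural fact about meet-irreducibles ($b > a$ implies $b \geq a_+$) gives $a \vee z \sim_a a_+ \vee z$ for all $z$, which is precisely the statement that $\sim_a$ is a join-congruence (since $\set{a,a_+}$ is the only non-singleton class), and then the standard fact that a finite join-semilattice with a least element is a lattice finishes the first assertion. The atomistic part is also handled correctly, including the degenerate case $a = \hat{0}$ and the observation that $a_+$ cannot be an atom because $\hat{0} < a < a_+$. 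One cosmetic remark: in your ``short check'' that no class lies strictly between $[\hat{0}]$ and $[\alpha]$, the case $[z] = [a]$ is excluded simply by unwinding the definition of the quotient order ($a \vee \alpha \sim_a \alpha$ forces $a \leq \alpha$, impossible for a non-atom $a \neq \hat{0}$), rather than by the structural fact you cite there; but the claim is true and the verification is as short as you promise, so this does not affect correctness.
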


\begin{lemma}\label{lem:factor}
Let $L, L'$ be finite lattices and $\phi: L \longrightarrow L'$ a join-preserving map.
\begin{enumerate}
	\item If $\phi$ is not injective, then there exists a meet-irreducible element $a \in L$ such that $\phi(a) = \phi(a_+)$.
	\item If $\phi(a) = \phi(a_+)$ for some meet-irreducible element $a \in L$, then $\phi$ factors through $L / \sim_a$.
\end{enumerate}
\end{lemma}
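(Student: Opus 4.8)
The plan is to handle the two parts separately, with essentially all of the work going into part~(1); part~(2) will follow formally from \Cref{lem:homom}. Throughout I will use the elementary observation that a join-preserving map is automatically order-preserving: if $x \leq y$ then $x \vee y = y$, so $\phi(y) = \phi(x \vee y) = \phi(x) \vee \phi(y) \geq \phi(x)$. I will invoke this monotonicity repeatedly. The first step of part~(1) is to reduce non-injectivity to a \emph{comparable} pair: since $\phi$ is not injective there are $u \neq v$ with $\phi(u) = \phi(v)$, and setting $q = u \vee v$ gives $\phi(q) = \phi(u) \vee \phi(v) = \phi(u)$; as $u \neq v$, at least one of $u,v$ is strictly below $q$, so after relabelling I obtain $p < q$ with $\phi(p) = \phi(q)$.

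The heart of the argument is to manufacture a meet-irreducible element from this data. I would consider the set $W = \set{ y \in L \with y \geq p \text{ and } q \not\leq y }$, which is non-empty since $p \in W$ (indeed $q \not\leq p$ because $q > p$). As $L$ is finite, $W$ has a maximal element $a$, and I claim $a$ is meet-irreducible. First, $q \not\leq a$ forces $a \neq \hat 1$, so $a$ has at least one cover. If $a$ had two distinct covers $b_1, b_2$, then each $b_i \geq p$ but $b_i \notin W$ by maximality of $a$, so $q \leq b_1$ and $q \leq b_2$; hence $q \leq b_1 \wedge b_2 = a$, using that two distinct covers of $a$ meet exactly in $a$ (the interval $[a,b_1]$ being trivial), which contradicts $q \not\leq a$. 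Thus $a$ has a unique cover $a_+$.

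It then remains to verify $\phi(a) = \phi(a_+)$, which I expect to be short. Since $q \not\leq a$ we have $q \vee a > a$, and because $a$ is meet-irreducible every element strictly above $a$ dominates $a_+$ (follow a maximal chain upward from $a$), so $a_+ \leq q \vee a$. Now $\phi(q \vee a) = \phi(q) \vee \phi(a) = \phi(p) \vee \phi(a) = \phi(a)$, the last equality because $p \leq a$ gives $\phi(p) \leq \phi(a)$. Applying monotonicity along $a \leq a_+ \leq q \vee a$ yields $\phi(a) \leq \phi(a_+) \leq \phi(q \vee a) = \phi(a)$, so all three coincide and in particular $\phi(a) = \phi(a_+)$. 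For part~(2), the hypothesis $\phi(a) = \phi(a_+)$ says precisely that $\phi$ is constant on the unique non-trivial $\sim_a$-class $\set{a, a_+}$ (every other class is a singleton), so $\phi$ descends to a well-defined set map $\bar\phi : L/\sim_a \to L'$ with $\bar\phi \circ \pi_a = \phi$; to see that $\bar\phi$ preserves joins I invoke \Cref{lem:homom}, by which $\pi_a$ is a surjective join-preserving map, giving $[x] \vee [y] = \pi_a(x) \vee \pi_a(y) = \pi_a(x \vee y) = [x \vee y]$, whence $\bar\phi([x] \vee [y]) = \phi(x \vee y) = \phi(x) \vee \phi(y) = \bar\phi([x]) \vee \bar\phi([y])$.

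The main obstacle I anticipate is the meet-irreducibility step in part~(1)---in particular justifying $b_1 \wedge b_2 = a$ for two distinct covers of $a$, which is where finiteness and the structure of cover relations are genuinely used. By contrast, the reduction to a comparable pair, the image computation finishing part~(1), and the entirety of part~(2) are routine once \Cref{lem:homom} is available.
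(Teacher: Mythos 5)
Your proof is correct. Note that the paper itself does not prove this lemma at all: it is imported verbatim as Lemma~4.3/4.4 of \cite{IKM2}, so there is no in-paper argument to compare against. Your self-contained proof is the natural one and it holds up at every step: the reduction of non-injectivity to a comparable pair $p < q$ via $q = u \vee v$ is valid; the choice of $a$ maximal in $\set{y \with y \geq p,\; q \not\leq y}$ is the right construction, and your justification that $a$ is meet-irreducible is sound (two distinct covers $b_1, b_2$ of $a$ would satisfy $b_1 \wedge b_2 = a$ because $b_1 \wedge b_2$ lies in the two-element interval $[a,b_1]$ and cannot equal $b_1$, forcing the contradiction $q \leq a$); the sandwich $\phi(a) \leq \phi(a_+) \leq \phi(q \vee a) = \phi(a)$ correctly finishes part~(1), using that any element strictly above a meet-irreducible $a$ lies above $a_+$. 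Part~(2) is handled properly as well: well-definedness of $\bar\phi$ needs only that $\phi$ is constant on the single non-trivial class $\set{a,a_+}$, and join-preservation of $\bar\phi$ follows formally from the join-preservation of $\pi_a$ guaranteed by \Cref{lem:homom}, which is exactly the right tool and is available in the paper. The only mild caveat is that ``factors through'' should be read as factoring by a join-preserving map (as the algorithm's correctness proof in Section~\ref{sec:enum} requires), and you do verify precisely that, so nothing is missing.
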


\section{An algorithm for computing lcm-lattices} \label{sec:enum}

The aim of this section is to present an algorithm for computing all lcm-lattices of monomial ideals with a given number of minimal generators.

For $k \in \NN \setminus\set{0}$ let $\bool{k}$ denote the lattice of subsets of a $k$-element set, i.e. the boolean lattice on $k$ atoms.
Note that $\bool{k}$ is the lcm-lattice of an ideal generated by $k$ indeterminates.
First of all, we make the following simple observation.
\begin{remark}\label{rem:free}
	For every atomistic lattice $L$ on $k$ atoms, there exists a surjective join-preserving map $\phi: \bool{k} \longrightarrow L$. The map $\phi$ may be constructed as follows. Let $\phi$ map the atoms of $\bool{k}$ bijectively on the atoms of $L$. For every other element $a \in \bool{k}$ we set $\phi(a) := \phi(a_1) \vee \dotsb \vee \phi(a_l)$ where $a = a_1 \vee a_2 \vee \dotsb \vee a_l$ is the unique (up to order) way to write $a$ as a join of atoms.
\end{remark}

Let $\mathcal{L}_k$ denote the set of all isomorphism classes of finite atomistic lattices with (exactly) $k$ atoms. Consider the following partial order on $\mathcal{L}_k$: We set $L \geq L'$ if and only if there exists a surjective join-preserving map $L \rightarrow L'$.
By \Cref{lem:homom} and \Cref{lem:factor}, this poset is graded by the cardinality of the lattices and the cover relations correspond to maps of the form $\pi_a: L \rightarrow L / \sim_a$.
We use these facts to obtain an algorithm (see Algorithm \ref{Alg:enum}) for generating all atomistic lattices with a given number of atoms. By \Cref{thm:lcmmap}, this algorithm implicitly offers a full classification in respect with  $\spdim_S S/I$, $\spdim_S I$, $\pdim_S S/I$ and $\pdim_S I$.
\medskip

\allowdisplaybreaks
\begin{algorithm}[H]
\SetKwFunction{Union}{{\bf Union}}
\SetKwFunction{Beg}{begin}
\SetKwFunction{En}{end}
\SetKwFunction{size}{size}
\SetKwFunction{Insert}{Insert}
\SetKwFunction{GenerateLattices}{{\bf GenerateLattices}}
\SetKwFunction{GenerateL}{{\bf GenerateL}}
\SetKwFunction{FactorizeModulo}{{\bf FactorizeModulo}}
\SetKwFunction{DeleteDuplicates}{{\bf DeleteDuplicates}}
\SetKwData{Container}{Container}
\SetKwData{Lattice}{Lattice}
\caption{Generating atomistic lattices on $k$ atoms}\label{Alg:enum}

\KwData{$k \in \ZZ, \ k\ge 1$}
\KwResult{A container containing the atomistic lattices on $k$ atoms, i.e. $\mathcal{L}_k$}
\Container \GenerateLattices{$k$}\;
\Begin{
\Container $R,W$\;
\Lattice $L,F$\;
\nl $W=\Insert{W,\GenerateL{k}}$\;
\nl \While {$\size{W}>0$}{
\nl    $R=\Union{R,W}$\;
    \Container $V$\;
\nl \For {$ i=\Beg{W}\ \KwTo\ i=\En{W}$ }{
    $L=W[i]$\;
\nl \For {$a\in L\ {\bf and }\ a\ \text{meet-irreducible}\ {\bf and }\ a\ \text{not an atom}$}{
\nl $P=\FactorizeModulo{L,a}$\;
    $V=\Insert{V,P}$\;
}
}
\nl $W=\DeleteDuplicates(V)$\;
}
\nl \Return $R$\;
}
\end{algorithm}

The container data type used for the implementation of the algorithm should allow basic access functions like \texttt{begin, end, size} (for example we want to ask for its size), insertion of an element or the union of the two containers.
We also assume that a data structure \texttt{Lattice} (suited for storing lattices) is given.

Below we describe the key steps in the algorithm.
\begin{itemize}
	\item line 1. The function {\bf GenerateL}  generates the lattice $\bool{k}$, which is further used to initialize the container $W$;
	\item line 2. Main loop, at each iteration the cardinality of the lattices contained in the container $W$ decreases by one.
		So after finitely many iterations, $W$ will be empty and the algorithm terminates.
    \item line 3. The results of a previous iteration of the main loop are stored in the container $R$.
	\item line 4. In this loop we analyze all lattices contained in $W$.
	\item line 5. We search for all possible factorizations of a given lattice $L$.
	\item line 6. If a factorization is possible, it is computed using the function {\bf FactorizeModulo} and stored in the container $V$.
	\item line 7. For each isomorphism class in $V$, we keep only one representative and append this to $W$. Then the main loop can be repeated.
    \item line 8. The container $R$ storing the generated lattices is returned.
\end{itemize}

\begin{theorem}
	Algorithm \ref{Alg:enum} returns $\mathcal{L}_k$.
\end{theorem}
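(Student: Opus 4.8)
The plan is to prove both directions of correctness: \emph{soundness}, that every lattice the algorithm outputs lies in $\mathcal{L}_k$, and \emph{completeness}, that every member of $\mathcal{L}_k$ is eventually output. Both rest on the poset structure of $(\mathcal{L}_k,\geq)$ introduced just before the algorithm, namely that it is graded by cardinality with cover relations given by the maps $\pi_a\colon L \to L/\sim_a$. Soundness I would handle by a straightforward induction on the main loop. The container $W$ is initialized with $\bool{k}$, which is atomistic on exactly $k$ atoms and hence lies in $\mathcal{L}_k$. Every lattice subsequently inserted has the form $L/\sim_a$ with $L \in \mathcal{L}_k$ and $a$ meet-irreducible but not an atom (the selection in line~5). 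By \Cref{lem:homom} such $L/\sim_a$ is again atomistic; and since $a$ is not an atom, neither is $a_+$, so $\pi_a$ collapses no atom and $L/\sim_a$ still has exactly $k$ atoms. Thus $R \subseteq \mathcal{L}_k$.

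The heart of the argument is completeness, which I would phrase as a loop invariant: after the search has descended $t$ levels, $W$ contains representatives of \emph{precisely} the lattices of $\mathcal{L}_k$ of cardinality $2^k - t$. Two facts drive this. First, by \Cref{rem:free} every $L \in \mathcal{L}_k$ admits a surjective join-preserving map from $\bool{k}$, so $\bool{k}$ is the unique maximum of $(\mathcal{L}_k,\geq)$; together with gradedness this guarantees that every $L$ is joined to $\bool{k}$ by a saturated descending chain, each step of which is some $\pi_a$. Second, a surjective join-preserving map between two atomistic lattices each having $k$ atoms is automatically bijective on atoms: an atom $b'$ downstairs is a surjective image, hence a join of images of atoms, and since $b'$ is an atom it equals one such image, whereupon a counting argument against the $k$ source atoms forces a bijection. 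Consequently the meet-irreducible element supplied by \Cref{lem:factor}(1) is never an atom, matching exactly the elements that lines~5--6 factor out via \Cref{lem:factor}(2).

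With these in hand the inductive step of the invariant reads as follows. Let $L'$ have cardinality $2^k-(t+1)$. Climbing one step up its saturated chain to $\bool{k}$ yields a lattice $L$ of cardinality $2^k - t$, which by the inductive hypothesis already sits in $W$; the cover relation exhibits $L' \cong L/\sim_a$ for a non-atom meet-irreducible $a$, so $L'$ is produced in $V$ and survives deduplication, entering the next $W$. The reverse containment, that everything in the new $W$ has cardinality $2^k-(t+1)$ and lies in $\mathcal{L}_k$, is exactly soundness at one level. Granting the invariant, I would conclude by observing that $R$ accumulates $\bigcup_t \set{L \in \mathcal{L}_k : |L| = 2^k - t} = \mathcal{L}_k$, while the loop terminates because cardinalities of atomistic lattices on $k$ atoms are bounded below, so some $W$ becomes empty.

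I expect the main obstacle to be the completeness half of the invariant: one must ensure that every isomorphism class of cardinality $2^k-(t+1)$ genuinely arises as a factorization of a class \emph{already stored} in $W$, not merely of some a priori larger lattice that the search might have skipped. This is precisely where the unique maximum $\bool{k}$ and the gradedness of $(\mathcal{L}_k,\geq)$ are indispensable, since they supply the required cardinality-$(2^k-t)$ parent and forbid the breadth-first search from missing a branch. The remaining delicate point is reconciling the abstract cover relations of $(\mathcal{L}_k,\geq)$ with the algorithm's restriction to non-atom meet-irreducibles, which the bijective-on-atoms observation resolves.
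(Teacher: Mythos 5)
Your proof is correct and follows essentially the same route as the paper's own proof: soundness via \Cref{lem:homom}, completeness by taking the surjection $\bool{k} \rightarrow L$ of \Cref{rem:free} and repeatedly factoring it through quotients $\pi_a$ via \Cref{lem:factor}, and termination from the strictly decreasing cardinality. The one substantive difference is that you make explicit a point the paper leaves implicit: \Cref{lem:factor}(1) only supplies \emph{some} meet-irreducible $a$ with $\phi(a)=\phi(a_+)$, whereas Algorithm \ref{Alg:enum} (line 5) factors only by meet-irreducible elements that are \emph{not} atoms, so one must check that the factorization steps can always be taken of this restricted form. Your observation that a surjective join-preserving map between atomistic lattices with the same number of atoms is automatically bijective on atoms --- so that the element produced by \Cref{lem:factor}(1) can never be an atom --- closes exactly this gap, and it is also what underlies the paper's earlier unproved assertion that $(\mathcal{L}_k,\geq)$ is graded with cover relations $\pi_a$, which both you and the paper rely on. Your breadth-first loop invariant is a more structured packaging of the paper's ``repeatedly applying \Cref{lem:factor}''; in substance the two arguments coincide, with yours being the more rigorous rendering.
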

\begin{proof}
	Every lattice that is produced by Algorithm \ref{Alg:enum} is atomistic and has $k$ atoms by \Cref{lem:homom}.
	
	On the other hand, let $L$ be an atomistic lattice on $k$ atoms.
	By Remark \ref{rem:free}, there exists a surjective join-preserving map $\phi: \bool{k} \rightarrow L$.
	By repeatedly applying \Cref{lem:factor}, we obtain a factorization of $\phi$ as
	\[\bool{k} \rightarrow L' \stackrel{\psi}{\rightarrow} L \]
	where $L'$ is a lattice produced by the algorithm and $\psi$ is bijective.
	
	Finally, the container returned by Algorithm \ref{Alg:enum} contains no duplicates, because in each iteration of the \texttt{while}-loop, the cardinality of the lattices contained in $W$ decreases by one.
\end{proof}

\section{Computational experiments -- lcm-lattices with few generators} \label{sec:sdepth}

In our paper \cite{IKM2} we gave a purely lattice theoretical formulation of the Stanley conjecture, as we saw in the Introduction. So turning to Conjecture \ref{conj:stanleylattice}, let us first relate this to the Stanley conjecture for monomial ideals. As a consequence of \Cref{thm:lcmmap} we obtain:

\begin{corollary}\label{coro:lcmmap}
The functions $\spdim_1, \spdim_2, \pdim_1, \pdim_2: \mathcal{L}_k \rightarrow \NN$ are all monotonous.
\end{corollary}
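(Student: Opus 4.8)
The plan is to reduce the statement directly to \Cref{thm:lcmmap}. By the definition of the partial order on $\mathcal{L}_k$, whenever $L \geq L'$ there is a surjective join-preserving map $\phi \colon L \to L'$, and both $L$ and $L'$ have exactly $k$ atoms. Using \Cref{prop:realize} I realize $L \cong L_I$ and $L' \cong L_{I'}$ for monomial ideals $I \subset S$ and $I' \subset S'$; the functions $\spdim_1, \spdim_2, \pdim_1, \pdim_2$ are well defined on isomorphism classes precisely by the last sentence of \Cref{thm:lcmmap}. If I can check that $\phi$ is bijective on the atoms, then \Cref{thm:lcmmap} applied to the induced map $L_I \to L_{I'}$ yields all four inequalities $\spdim_i L \geq \spdim_i L'$ and $\pdim_i L \geq \pdim_i L'$ simultaneously, which is exactly the asserted monotonicity.

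Thus the only point requiring a genuine argument is that a surjective join-preserving map between two atomistic lattices with the same number $k$ of atoms must restrict to a bijection on atoms. First I would note that, since $L$ is atomistic, every element of $L$ is a join of atoms; hence by surjectivity and join-preservation every element of $L'$ is a join of elements of the form $\phi(b)$ with $b$ an atom of $L$. In particular, each atom $a'$ of $L'$ can be written as $a' = \bigvee_{b} \phi(b)$ over a nonempty set of atoms $b$ of $L$. Because $a'$ is an atom, each $\phi(b)$ occurring satisfies $\phi(b) \leq a'$ and therefore equals either $\hat 0$ or $a'$; since the join equals $a' \neq \hat 0$, at least one summand equals $a'$. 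Hence every atom of $L'$ lies in the image $\phi(A)$ of the atom set $A$ of $L$.

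A counting argument then closes the gap: the set $\phi(A)$ contains the $k$-element atom set $A'$ of $L'$, so $\lvert \phi(A) \rvert \geq k = \lvert A \rvert$, which forces $\phi(A) = A'$ and forces $\phi|_A$ to be injective. Consequently $\phi$ maps the atoms of $L$ bijectively onto the atoms of $L'$, as required to invoke \Cref{thm:lcmmap}. I expect this counting step to be the only subtle point; the remainder is a bookkeeping translation between the lattice-theoretic functions $\spdim_i, \pdim_i$ and the corresponding module invariants of $I$ and $S/I$, after which the four inequalities are immediate from \Cref{thm:lcmmap}.
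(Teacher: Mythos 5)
Your proposal is correct and follows essentially the same route as the paper, which states the corollary as an immediate consequence of Theorem~\ref{thm:lcmmap} with no written proof. Your counting argument---showing that a surjective join-preserving map between atomistic lattices with the same number $k$ of atoms is automatically bijective on atoms, so that Theorem~\ref{thm:lcmmap} applies to the maps defining the partial order on $\mathcal{L}_k$---is valid and supplies exactly the detail the paper leaves implicit.
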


This allows the following strategy to computationally prove the Stanley conjecture for monomial ideals with a fixed number $k$ of generators:
First, we generate $\mathcal{L}_k$ with a modified version of Algorithm \ref{Alg:enum}, where we keep track of the relations between the lattices.
We implemented Algorithm \ref{Alg:enum} in Maple using the \texttt{poset}-package by J. Stembridge \cite{JS}.
With our implementation we were able to enumerate $\mathcal{L}_k$ for $k \leq 5$. The number of isomorphism classes is given in Table \ref{table:number}.

\begin{table}[h]
\begin{tabular}{|l|l|l|}
\hline $k$ & $\#$Lattices & Computation times \\
\hline $2$ & $1$ & - \\
\hline $3$ & $4$ & $0.05$ sec\\
\hline $4$ & $50$ & $2$ sec\\
\hline $5$ & $7443$ & $35$ min\\
\hline $6$ & $>75.000$ & $> 1$ week\\
\hline
\end{tabular} \vspace{0.3cm}
\caption{The numbers of atomistic lattices with $k$ generators.}\label{table:number}
\end{table}

For $k \leq 5$, the experiments were run on a
Lenovo Thinkpad computer with a Intel Core i5-2450M processor running at 2.50 GHz with 4 GB of RAM.
For $k=6$, we used a desktop computer with similar specifications and we stopped the algorithm after one week.
\medskip

Then we compute $\pdim_1$ for all elements of $\mathcal{L}_k$.
For this, we choose an ideal $I$ with $L_I \cong L$ for each $L \in \mathcal{L}_k$ and compute its projective dimension using the computer algebra package \texttt{Macaulay2} \cite{M2}.
For $k = 5$, this took only a few seconds.

For each fixed value of $\pdim_1$ (resp.~$\pdim_2$), we select from $\mathcal{L}_k$ the maximal elements with this value.
These are much fewer lattices, e.g. there are 8 cases for $k=5$, which should be compared with the total number $|{\mathcal{L}_k}| = 7443$.
Then, for each of these \qq{extremal} lattices, we verify the Stanley conjecture using a fast C++ implementation of the algorithms described in \cite{IJ} and \cite{IZ}.
By \Cref{coro:lcmmap}, that is enough to prove the Stanley conjecture for all elements of $\mathcal{L}_k$; this is the content of the already mentioned Theorem \ref{thm:main}:

\begin{theorem-non}[\ref{thm:main}]
	For every monomial ideal $I \subsetneq S$ with at most $5$ generators, it holds that
	\[ \depth_S S/I = \sdepth_S S/I < \sdepth_S I. \]
	This further implies that all parts of Conjecture \ref{conj:stanleylattice} hold in this case.
\end{theorem-non}
\begin{proof}
	We computationally verified the inequalities $\depth_S S/I \leq \sdepth_S S/I$ and $\depth_S I \leq \sdepth_S I$ by the method described above.
	We further verified the inequality $\depth_S S/I \geq \sdepth_S S/I$, using that it is enough to verify it at \emph{minimal} elements of $\mathcal{L}_k$ with a given projective dimension.
\end{proof}

Finally, we present an example to show that the equality $\sdepth_S S/I = \depth_S S/I$ does not hold in general for ideals with more than five generators.
\begin{example}\label{ex:6generators}
	Consider the ideal $I := (x_1x_3,x_2x_3,x_1x_4,x_2x_4,x_1x_5,x_2x_5)$ in the ring $\QQ[x_1,\dotsc,x_5]$.
	It is the Stanley-Reisner ideal of the disjoint union of a triangle and an edge. As this simplicial complex is not connected, the depth of $S/I$ is $1$ (for a brief explanation of this fact, see \cite[Exercise 5.1.26]{BH}).
	On the other hand, the following Stanley decomposition of $S/I$ shows that its Stanley depth is $2$:
	\[ \QQ[x_1,x_2] \oplus x_3 \QQ[x_3,x_4] \oplus x_4 \QQ[x_4,x_5]  \oplus x_5 \QQ[x_5,x_3] \oplus x_3x_4x_5\QQ[x_3,x_4,x_5]. \]
\end{example}

\begin{remark}
From Theorem \ref{thm:main}, we deduce that for every monomial ideal $I \subsetneq S$ with at most $5$ generators, it holds that
\[
\sdepth_S I \geq \sdepth_S S/I + 1,
\]
confirming Herzog's conjecture \cite[Conjecture~1.64]{H} (and the remark after it).
\end{remark}

\section*{Acknowledgements}
The authors would like to thank the anonymous reviewer for several helpful suggestions.

\section*{Appendix: Invariants of some lattices}

In this appendix we include some of the numerical data obtained during our computational experiments. We think they can be as useful for the reader as they were for us.
Table \ref{table:ideals} shows for each $L \in \mathcal{L}_4$ an example of an ideal $I$ such that $L_I \cong L$. 
These ideals were found using Theorem 3.4 of \cite{IKM2}.

Tables \ref{table:first25} and \ref{table:last25} record some invariants associated to lattices $L_I$, namely the projective and Stanley projective dimensions of both $S/I$ and $I$ and the order-dimension $\dim L$ (which is defined to be the smallest cardinal $m$ such that $L$ is a sublattice of a product of $m$ chains).

For each $L$, we also give the maximum of the codimensions of $S/I$ of all ideals $I$ with $L \cong L_I$. We denote this by
\[ \max \codim L := \max\set{\codim S/I \with L_I \cong L}. \]
Moreover, the length and breadth of a lattice $L$ are shown. Just for the sake of completeness: Let $n,p$ be natural numbers. A lattice $L$ is said to be of length $n$ if there is a chain in $L$ of length $n$ and all chains in $L$ are of length $\leq n$. The lattice is said to be of breadth $p$, if $p$ is the smallest integer with the property that for every nonempty subset $X \subseteq L$, there exists a nonempty subset $Y \subseteq X$ such that $|Y|\leq p$ and $\bigvee X =\bigvee Y$. For further details the reader is referred to the book of Gr\"atzer \cite{Gr}.

Finally, we indicate for each lattice $L$ whether there \emph{exists} a monomial ideal $I$ with $L \cong L_I$ which is Cohen-Macaulay or strongly generic.
Here, an $X$ indicates that there exists an ideal with the given property, while a blank space indicates that such an ideal does not exist. For details on the Cohen-Macaulay property, we refer the reader to the monograph \cite{BH} dedicated to this subject.
Note that even in these cases, in general not all ideals $I$ with $L \cong L_I$ satisfy the respective property.
The reason for this is that the projective dimension is determined by the lcm-lattice, whereas the codimension is not.
Moreover, for a given lattice $L$, there exists a Cohen-Macaulay monomial ideal $I$ with $L_I \cong L$ if and only if $\max \codim L = \pdim S/I$.
We use the definition of strongly generic as given in \cite{BPS} (just called \emph{generic} there), i.e.\ no variable should appear with the same exponent in two different minimal generators.
We distinguish it from the general notion introduced in \cite{MSY}.
A handy criterion for the existence of a strongly generic monomial ideal with a given lcm-lattice is given in \cite[Theorem 6.2]{M}.

For the convenience of the reader we summarize the known relations among the invariants. It is well-known that $\pdim S/I = \pdim I + 1$. The other relations are

\[2 \leq \max \codim L \leq \set{\spdim S/I, \pdim S/I} \leq \set{\mathrm{length} L, \dim L} \leq k = 4 \]
\[1 \leq \lfloor\frac{\max \codim L}{2}\rfloor \leq \spdim I \leq \set{\mathrm{length} L  - 1, \dim L - 1, \lfloor\frac{k}{2}\rfloor = 2}. \]

Here, we write $a \leq \set{b,c} \leq d$ to express the two inequalities $a \leq b \leq d$ and $a\leq c \leq d$, while no relation between $b$ and $c$ is known.
These inequalities hold in general for all ideals $I$ with $L \cong L_I$ and can be found in \cite[Proposition 1.2.13]{BH}, \cite[Theorem 9]{H}, \cite{KSF}, \cite{O} and \cite[Theorem 5.2, Corollary 5.9]{IKM2}.
%

\allowdisplaybreaks

\begin{table}[h]
\begin{minipage}{.5\linewidth}
\begin{tabular}{c | c}\hline
No. & Ideal  \\ \hline
1 & $(x_4, x_3, x_2, x_1)$ \\ \hline
2 & $(x_3^2, x_2^2, x_1^2, x_1x_2x_3)$ \\ \hline
3 & $(x_2^2, x_1^2, x_3, x_1x_2)$ \\ \hline
4 & $(x_3^2x_4, x_2^2x_5, x_1x_3x_5, x_1x_2x_4)$ \\ \hline
5 & $(x_2^2, x_1x_3, x_1^2, x_1x_2)$ \\ \hline
6 & $(x_2^2x_3, x_1^2x_4, x_2x_4, x_1x_3)$ \\ \hline
7 & $(x_3^2x_4, x_1x_2x_3, x_2^2, x_1x_4)$ \\ \hline
8 & $(x_4^2x_5x_6, x_2x_3x_4, x_1x_3x_5, x_1x_2x_6)$ \\ \hline
9 & $(x_2^2x_3, x_1x_3^2, x_1^2x_2, x_1x_2x_3)$ \\ \hline
10 & $(x_2^2x_3, x_1^3, x_1^2x_2, x_1x_3)$ \\ \hline
11 & $(x_2x_3x_4, x_1x_4^2, x_1^2x_3, x_1x_2)$ \\ \hline
12 & $(x_3^2x_4x_5, x_1x_2x_3, x_2x_4, x_1x_5)$ \\ \hline
13 & $(x_2^2x_3, x_1x_3, x_1x_2, x_4)$ \\ \hline
14 & $(x_3^2, x_2^2, x_1x_3, x_1x_2)$ \\ \hline
15 & $(x_1x_3x_4, x_1x_2x_5, x_4^2x_5, x_2x_3)$ \\ \hline
16 & $(x_2x_4x_5, x_2x_3x_6, x_1x_4x_6, x_1x_3x_5)$ \\ \hline
17 & $(x_2^3x_3, x_1^3x_3, x_1^2x_2^2, x_1x_2x_3)$ \\ \hline
18 & $(x_2^2x_3, x_1^2, x_1x_3, x_1x_2)$ \\ \hline
19 & $(x_2^3, x_1^3, x_1^2x_2, x_1x_2^2)$ \\ \hline
20 & $(x_2^2x_3x_4, x_1^2x_3, x_1^2x_2, x_1x_4)$ \\ \hline
21 & $(x_2x_3x_4, x_1^3x_4, x_1^2x_2, x_1x_3)$ \\ \hline
22 & $(x_2x_3, x_1x_3^2, x_1^2, x_1x_2)$ \\ \hline
23 & $(x_3^2x_4, x_2x_3, x_1x_4, x_1x_2)$ \\ \hline
24 & $(x_1x_3x_4, x_1x_2x_5, x_3x_5, x_2x_4)$ \\ \hline
25 & $(x_1x_2x_4, x_2x_3, x_1x_3, x_4^2)$ \\ \hline
\end{tabular}
%
\end{minipage}\begin{minipage}{.5\linewidth}
\begin{tabular}{c | c}\hline
No. & Ideal \\ \hline
26 & $(x_2^3x_3, x_1^2x_3, x_1x_2^2, x_1x_2x_3)$ \\ \hline
27 & $(x_2^3x_3^2, x_1^2x_3^2, x_1^2x_2^2, x_1x_2x_3)$ \\ \hline
28 & $(x_2^2x_3x_4, x_1x_4, x_1x_3, x_1x_2)$ \\ \hline
29 & $(x_2x_3x_4, x_1^2x_4, x_1x_3, x_1x_2)$ \\ \hline
30 & $(x_2^3x_3, x_1^2x_3, x_1x_2^2, x_1^2x_2)$ \\ \hline
31 & $(x_2x_3x_4, x_1^2x_3, x_1^2x_2, x_1x_4)$ \\ \hline
32 & $(x_2x_3, x_1^3, x_1^2x_2, x_1x_3)$ \\ \hline
33 & $(x_2x_3, x_1x_3, x_1x_2^2, x_1^2x_2)$ \\ \hline
34 & $(x_2x_4, x_2x_3, x_1x_4, x_1x_3)$ \\ \hline
35 & $(x_2x_3, x_1x_3, x_1x_2, x_4)$ \\ \hline
36 & $(x_2^2x_3x_4, x_1^2x_3x_4, x_1x_2x_4, x_1x_2x_3)$ \\ \hline
37 & $(x_2^2x_3^2, x_1^2x_3^2, x_1^2x_2, x_1x_2x_3)$ \\ \hline
38 & $(x_2^3x_3^2, x_1x_3^2, x_1x_2^2, x_1x_2x_3)$ \\ \hline
39 & $(x_2^2x_3^2, x_1^2x_3^2, x_1^2x_2^2, x_1x_2x_3)$ \\ \hline
40 & $(x_2x_3x_4, x_1x_4, x_1x_3, x_1x_2)$ \\ \hline
41 & $(x_2x_3, x_1^2, x_1x_3, x_1x_2)$ \\ \hline
42 & $(x_2^3x_3, x_1^2x_3, x_1^2x_2, x_1x_2^2x_3)$ \\ \hline
43 & $(x_2^2x_3, x_1^2x_3, x_1^2x_2, x_1x_2^2)$ \\ \hline
44 & $(x_2^2x_3^2x_4, x_1x_3^2x_4, x_1x_2x_4, x_1x_2x_3)$ \\ \hline
45 & $(x_2^2x_3, x_1^2x_3, x_1^2x_2, x_1x_2x_3)$ \\ \hline
46 & $(x_2^2x_3^2, x_1x_3^2, x_1x_2^2, x_1x_2x_3)$ \\ \hline
47 & $(x_2^2x_3x_4, x_1^2x_3x_4, x_1^2x_2x_4, x_1x_2^2x_3)$ \\ \hline
48 & $(x_2^2x_3x_4, x_1x_3x_4, x_1x_2x_4, x_1x_2x_3)$ \\ \hline
49 & $(x_2x_3^2x_4, x_1x_3^2x_4, x_1x_2x_4, x_1x_2x_3)$ \\ \hline
50 & $(x_2x_3x_4, x_1x_3x_4, x_1x_2x_4, x_1x_2x_3)$ \\ \hline
\end{tabular}
\end{minipage}
\vspace*{2ex} \caption{The ideals} \label{table:ideals}
\end{table}
\
\medskip
\
\begin{sidewaystable}[h]
\vspace{16cm}

\begin{tabular}{*{26}{c}}\hline
No. 			& 1 & 2 & 3 & 4 & 5 & 6 & 7 & 8 & 9 & 10 & 11 & 12 & 13 & 14 & 15 & 16 & 17 & 18 & 19 & 20 & 21 & 22 & 23 & 24 & 25\\ \hline
$|L|$ 			& 16 & 15 & 14 & 14 & 13 & 13 & 13 & 13 & 12 & 12 & 12 & 12 & 12 & 12 & 12 & 12 & 11 & 11 & 11 & 11 & 11 & 11 & 11 & 11 & 11\\ \hline
pdim $S/I$		& 4 & 3 & 3 & 3 & 3 & 3 & 3 & 3 & 2 & 3 & 3 & 3 & 3 & 3 & 3 & 3 & 2 & 3 & 2 & 3 & 3 & 3 & 3 & 3 & 3\\
spdim $S/I$		& 4 & 3 & 3 & 3 & 3 & 3 & 3 & 3 & 2 & 3 & 3 & 3 & 3 & 3 & 3 & 3 & 2 & 3 & 2 & 3 & 3 & 3 & 3 & 3 & 3\\ \hline
pdim $I$		& 3 & 2 & 2 & 2 & 2 & 2 & 2 & 2 & 1 & 2 & 2 & 2 & 2 & 2 & 2 & 2 & 1 & 2 & 1 & 2 & 2 & 2 & 2 & 2 & 2\\
spdim $I$		& 2 & 2 & 2 & 2 & 1 & 2 & 2 & 2 & 1 & 1 & 1 & 2 & 2 & 1 & 2 & 2 & 1 & 1 & 1 & 1 & 1 & 1 & 1 & 2 & 2\\ \hline
max codim $L$ & 4 & 3 & 3 & 2 & 2 & 2 & 2 & 2 & 2 & 2 & 2 & 2 & 3 & 2 & 2 & 2 & 2 & 2 & 2 & 2 & 2 & 2 & 2 & 2 & 2 \\ \hline
Length $L$ 		& 4 & 4 & 4 & 4 & 4 & 4 & 4 & 4 & 4 & 4 & 4 & 4 & 4 & 4 & 4 & 3 & 4 & 4 & 4 & 4 & 4 & 4 & 4 & 3 & 4\\ \hline
$\dim L$ 		& 4 & 3 & 3 & 3 & 3 & 3 & 3 & 3 & 3 & 3 & 3 & 3 & 3 & 3 & 3 & 3 & 3 & 3 & 2 & 3 & 3 & 3 & 3 & 3 & 3\\ \hline
Breadth $L$ 	& 4 & 3 & 3 & 3 & 3 & 3 & 3 & 3 & 3 & 3 & 3 & 3 & 3 & 3 & 3 & 3 & 2 & 3 & 2 & 3 & 3 & 3 & 3 & 3 & 3\\ \hline
Cohen-Macaulay 	& X & X & X &   &   &   &   &   & X &   &   &   & X &   &   &   & X &   & X &   &   &   &   &   & \\ \hline
Strongly generic 		& X & X & X &   & X & X &   &   & X & X &   &   &   &   &   &   & X &   & X &   &   &   &   &   &   \\ \hline

\end{tabular}
\vspace*{2ex}
\caption{First 25} \label{table:first25}

\hspace{2cm}

\begin{tabular}{*{26}{c}}\hline
No. 			& 26 & 27 & 28 & 29 & 30 & 31 & 32 & 33 & 34 & 35 & 36 & 37 & 38 & 39 & 40 & 41 & 42 & 43 & 44 & 45 & 46 & 47 & 48 & 49 & 50\\ \hline
$|L|$ 			& 10 & 10 & 10 & 10 & 10 & 10 & 10 & 10 & 10 & 10 & 9  & 9  & 9  & 9  & 9  & 9  & 9  & 9  & 8  & 8  & 8  & 8  & 7  & 7  & 6\\ \hline
pdim $S/I$ 		& 2  & 2  & 3  & 3  & 2  & 3  & 3  & 3  & 3  & 3  & 2  & 2  & 2  & 2  & 3  & 3  & 2  & 2  & 2  & 2  & 2  & 2  & 2  & 2  & 2\\
spdim $S/I$ 	& 2  & 2  & 3  & 3  & 2  & 3  & 3  & 3  & 3  & 3  & 2  & 2  & 2  & 2  & 3  & 3  & 2  & 2  & 2  & 2  & 2  & 2  & 2  & 2  & 2\\ \hline
pdim $I$ 		& 1  & 1  & 2  & 2  & 1  & 2  & 2  & 2  & 2  & 2  & 1  & 1  & 1  & 1  & 2  & 2  & 1  & 1  & 1  & 1  & 1  & 1  & 1  & 1  & 1\\
spdim $I$ 		& 1  & 1  & 1  & 1  & 1  & 1  & 1  & 1  & 1  & 2  & 1  & 1  & 1  & 1  & 1  & 1  & 1  & 1  & 1  & 1  & 1  & 1  & 1  & 1  & 1\\ \hline
max codim $L$ & 2  & 2  & 2  & 2  & 2  & 2  & 2  & 2  & 2  & 3  & 2  & 2  & 2  & 2  & 2  & 2  & 2  & 2  & 2  & 2  & 2  & 2  & 2  & 2  & 2 \\ \hline

Length $L$ 		& 4  & 4  & 4  & 4  & 4  & 3  & 4  & 4  & 3  & 3  & 4  & 4  & 4  & 3  & 3  & 3  & 4  & 3  & 4  & 3  & 3  & 3  & 3  & 3  & 2\\ \hline
$\dim L$ 		& 2  & 3  & 3  & 3  & 2  & 3  & 3  & 3  & 3  & 3  & 2  & 2  & 2  & 3  & 3  & 3  & 2  & 2  & 2  & 2  & 2  & 2  & 2  & 2  & 2\\ \hline
Breadth $L$ 	& 2  & 2  & 3  & 3  & 2  & 3  & 3  & 3  & 2  & 3  & 2  & 2  & 2  & 2  & 3  & 3  & 2  & 2  & 2  & 2  & 2  & 2  & 2  & 2  & 2\\ \hline
Cohen-Macaulay	& X  & X  &    &    & X  &    &    &    &    & X  & X  & X  & X  & X  &    &    & X  & X  & X  & X  & X  & X  & X  & X  & X \\ \hline
Strongly generic 		&    & X  &    &    &    &    &    &    &    &    &    &    &    & X  &    &    &    &    &    &    &    &    &    &    &   \\ \hline

\end{tabular}

\vspace*{2ex} \caption{Last 25} \label{table:last25}
\end{sidewaystable}

\FloatBarrier

\bibliographystyle{alpha}
\bibliography{LCM2}

\end{document}